\newcommand{\hh}{\mathfrak{H}}
\newcommand{\kk}{\mathfrak{K}}
\newcommand{\N}{\mathbb N}
\newcommand{\R}{\mathbb R} 
\newcommand{\SP}{\mathbb S}
\newcommand{\Ss}{\mathcal{S}}
\newcommand{\T}{\mathrm{T}}
\newcommand{\U}{\EuScript{U}}
\newcommand{\V}{\EuScript{V}}
\newcommand{\X}{\mathfrak{X}}
\newcommand{\XX}{\EuScript{X}}
\newcommand{\Y}{\mathfrak{Y}}
\newcommand{\YY}{\EuScript{Y}}
\newcommand{\ZZ}{\EuScript{Z}}
\newcommand{\cotan}{\mbox{cotan}}
\newcommand{\ds}{\displaystyle}
\newcommand{\scs}{\scriptstyle}
\newcommand{\ol}[1]{\overline{#1}}
\newcommand{\dg}[1]{\mbox{deg} \left({#1}\right)}
\newcommand{\ie}{\textit{i.e.~}}
\newcommand{\eg}{\textit{e.g.~}}
\newtheorem{theorem}{Theorem}  
\newtheorem{proposition}{Proposition}
\newtheorem{corollary}{Corollary}
\newtheorem{rem}{Remark}
\newtheorem{definition}{Definition}
\newcounter{example}[section]
\newcounter{algorithm}[section]
\providecommand{\keywords}[1]{\textbf{\textit{Keywords---}} #1}
\title{Intrinsic and Apparent Singularities in Differentially Flat Systems, and Application to Global Motion Planning}  %\tnoteref{mytitlenote}
\author{Yirmeyahu J. Kaminski\thanks{Department of Applied Mathematics, Holon Institute of Technology, Holon, Israel, \hfill \break e-mail: jykaminski@gmail.com} \and Jean L\'{e}vine\thanks{CAS, Unit\'e Maths et Syst\`emes, MINES-ParisTech, PSL Research University, 60 Bd Saint-Michel, 75272 Paris Cedex 06, France, e-mail: jean.levine@mines-paristech.fr} \and Fran\c{c}ois Ollivier\thanks{LIX, \'Ecole Polytechnique, 91128 Palaiseau Cedex, France, \hfill \break e-mail: Francois.Ollivier@lix.polytechnique.fr}
}
\begin{document}
\date{}
 \maketitle

%% or include affiliations in footnotes:
%\author[mymainaddress]{}

%\author[mysecondaryaddress]{Global Customer Service\corref{mycorrespondingauthor}}

%\ead{support@elsevier.com}

%\address[mymainaddress]{}
%\address[mysecondaryaddress]{360 Park Avenue South, New York}

\begin{abstract}
In this paper, we study the singularities of differentially flat systems, in the perspective of providing global or semi-global motion planning solutions for such systems: flat outputs may fail to be globally defined, thus potentially preventing from planning trajectories leaving their domain of definition, the complement of which we call \emph{singular}. Such singular subsets are classified into two types:  \emph{apparent} and \emph{intrinsic}. A rigorous definition of these singularities is introduced in terms of atlas and local charts in the framework of the differential geometry of jets of infinite order and Lie-B\"acklund isomorphisms. We then give an inclusion result allowing to effectively compute all or part of the intrinsic singularities. Finally, we show how our results apply to the global motion planning of the celebrated example of non holonomic car.
\end{abstract}

\keywords{differential flatness; jets of infinite order; Lie-B\"acklund isomorphism; atlas; local chart; apparent and intrinsic singularity; global motion planning}

%\linenumbers

\section{Introduction}

Differential flatness has become a central concept in non-linear control theory for the past two decades. See~\cite{FLMR_95,FLMR_99}, the overviews \cite{MMR-ecc,SRA} and \cite{Levine-09} for a thoroughgoing presentation. 

Consider a non-linear system on a smooth $n$-dimensional manifold $X$ given by
\begin{equation}\label{eq::explicit_form}
\dot{x} = f(x,u)
\end{equation}
where $x \in X$ is the $n$-dimensional state vector and $u \in \R^m$ the input or control vector, with $m \leq n$ to avoid trivial situations. 

We consider infinitely prolonged coordinates of the form $(x,\overline{u})\triangleq (x,u,\dot{u}, \ddot{u},\ldots)\in X\times\R^m_{\infty} \triangleq X\times \R^m\times \R^m\times\cdots$ where the latter cartesian product is made of a countably infinite number of copies of $\R^m$.

Roughly speaking, system~\eqref{eq::explicit_form} is said to be (differentially) flat\footnote{This is not a rigorous definition but rather an informal presentation, without advanced mathematics, of the flatness concept. Problems associated to this informal definition are reported in \cite[Section 5.2]{Levine-09}. For a rigorous definition, in the context of implicit systems, the reader may refer to definitions \ref{L-B-equi:def} and \ref{flatness:def} of Section \ref{sec::atlas}.} at a point $(x_0,\overline{u}_0 )\triangleq(x_0,u_0,\dot{u}_0,\ldots) \in X\times \R^m_{\infty}$, if there exists an $m$-dimensional vector $y = (y_1, \dots, y_m)$ satisfying the following statements:
\begin{itemize}
\item $y$ is a smooth function of $x$, $u$ and time derivatives of $u$ up a to a finite order $\beta= (\beta_1, \ldots, \beta_m)$, \ie $y = \Psi(x,u,\dot{u},\dots, u^{(\beta)})$, where $u^{(\beta)}$ stands for $(u_{1}^{(\beta_{1})}, \ldots, u_{m}^{(\beta_{m})})$ and where $u_{i}^{(\beta_{i})}$ is the $\beta_{i}$th order time derivative of $u_{i}$, $i=1, \ldots,m$, in a neighborhood of the point $(x_0,\overline{u}_0)$;
\item $y$ and its successive time derivatives $\dot{y}, \ddot{y}, \dots$ are locally differentially independent in this neighborhood;
\item $x$ and $u$ are smooth functions of $y$ and its time derivatives up to a finite order $\alpha= (\alpha_1, \ldots,\alpha_m)$, \ie $(x,u) = \Phi(y,\dot{y},\dots,y^{(\alpha)})$ in a neighborhood of the point $(y_0,\dot{y}_0,\ldots) \triangleq (\Psi(x_0,u_0,\dot{u}_0,\dots, u_0^{(\beta)}), \dot{\Psi}(x_0,u_0,\dot{u}_0,\dots, u_0^{(\beta+1)}), \ldots) $.
\end{itemize}
Then the vector $y$ is called \emph{flat output}.

Note that it is convenient to regard the above defined  functions $\Phi$ and $\Psi$ as smooth functions over infinite order jet spaces endowed with the product topology\footnote{Recall that in this topology, a continuous function only depends on a finite number of variables, \ie, in this context of jets of infinite order, on a finite number of successive derivatives of $u$ (see \eg \cite[Section 5.3.2]{Levine-09}).} \cite{KLV_86,Zharinov-92,FLMR_99,Levine-09}. They are then called \emph{Lie-B\"acklund isomorphisms} and are inverse one of each other (see \cite{FLMR_99,Levine-09}). However, these functions may be defined on suitable neighborhoods that need not cover the whole space. We thus may want to know where such isomorphisms do not exist at all, a set that may be roughly qualified of \emph{intrinsically singular}, thus motivating the present work: if two points are separated by such an intrinsic singularity, it is intuitively impossible to join them by a smooth curve satisfying the system differential equations and, thus, to globally solve the motion planning problem\footnote{By global motion planning problem, we mean that two arbitrary points of the infinite jet space associated to the system, once the set of intrinsic singularities has been removed, can be joined by a system's trajectory, and thus that this set is connected by arcs.}.

More precisely, the notions of \emph{apparent and intrinsic singularities} are introduced thanks to the construction of an \emph{atlas}, that we call \emph{Lie-B\"acklund atlas},  where \emph{local charts} are made of the open sets where the Lie-B\"acklund isomorphisms, defining the flat outputs, are non degenerated, in the spirit of \cite{CE_14,CE_17} where a comparable idea was applied to a quadcopter model. Intrinsic singularities are then defined as points where flat outputs fail to exist, \ie that are contained in no above defined chart at all. Other types of singularities are called apparent, as they can be ruled out by switching to another flat output well defined in an intersecting chart. Our intrinsic singularity notion may be seen as a generalization of the one  introduced in \cite{Li-Respondek} in the particular case of two-input driftless systems such as cars with trailers, and restricted to the so-called $x$-flat outputs.

Our main result, apart from the above Lie-B\"acklund atlas and singularities definition, then concerns the inclusion of a remarkable and effectively computable set in the set of intrinsic singularities. Note that, since finitely computable necessary and sufficient conditions of non existence of flat output are not available in general \cite{Levine-09,Levine-11}, an easily computable complete characterization of the set of intrinsic singularities is not presently known and it may be useful to label all or part of the singularities as intrinsic thanks to their membership of another set.

To briefly describe this result, we start from the necessary and sufficient conditions for the existence of local flat outputs of meromorphic systems of \cite{Levine-11}\footnote{Other approaches to flatness characterization may be found in \cite{ABMP-ieee-95,Cht,Antritt_2010}}. It consists in firstly transforming the system \eqref{eq::explicit_form} in the locally equivalent implicit form: 
\begin{equation}
\label{eq::implicit_form}
F(x,\dot{x}) = 0
\end{equation} 
where $F$ is assumed meromorphic, and introducing the operator $\tau$, the trivial Cartan field on the manifold of global coordinates $(x,\dot{x}, \ddot{x},\ldots)$, given by $\tau = \sum_{i=1}^n \sum_{j \geq 0} x_i^{(j+1)} \frac{\partial}{\partial x_i^{(j)}}$. Then, we compute the \emph{diagonal} or \emph{Smith-Jacobson decomposition} \cite{Cohn,Levine-09} of the following polynomial matrix: 
\begin{equation}\label{polymat:eq}
P(F) = \frac{\partial F}{\partial x} + \frac{\partial F}{\partial \dot{x}} \tau
\end{equation}
a matrix that describes the variational system associated to \eqref{eq::implicit_form}, and that lies in the ring of matrices whose entries are polynomials in the operator $\tau$ with meromorphic coefficients.

We prove that the set of intrinsic singularities contains the set where $P(F)$ is \emph{not} hyper-regular (see \cite{Levine-09}). As a corollary, we deduce that if an equilibrium point is not first order controllable, then it is an intrinsic singularity. 

These results are applied to the global motion planning problem of the well-known non-holonomic car, which is only used here as a benchmark in order to show how the classical and simple flatness-based motion planning methodology can be extended in presence of singularities. It is also meant to help the reader verifying that the introduced concepts, in the relatively arduous context of Lie-B\"acklund isomorphisms, are nevertheless intuitive and well suited to this situation.

Note that different approaches, also leading to global results, have already been extensively developed in the context of non holonomic systems, based on controllability, Lie brackets of vector fields and piecewise trajectory generation by sinusoids \cite{Murray_93,Jean_96,Chitour_13,Jean_14}, or using Brockett-Coron stabilization results  \cite{Brockett_83,Coron}. However, though some particular nonholonomic systems, as the car example, happen to be flat, our approach applies to the class of flat systems which is different, including \eg pendulum systems, unmanned aerial vehicles and many others that do not belong to the nonholonomic class (see \cite{MMR-ecc,SRA,Levine-09,CE_14,CE_17}).

Remark that, in the car example, the obtained intrinsic singularities are the same as the ones revealed in \cite{Murray_93,Jean_96,Chitour_13,Jean_14} where first order controllability fails to hold, or, according to \cite{Brockett_83,Coron}, where stabilisation by continuous state feedback is impossible. However, the degree of generality of this coincidence is not presently known.

The paper is organized as follows. In section~\ref{sec::atlas}, we introduce the basic language of Lie-B\"acklund atlas and charts. Then this leads to a computational approach for calculating intrinsic singularities. In particular, their links with the hyper-singularity of the polynomial matrix \eqref{polymat:eq} of the variational system is established in Proposition~\ref{hyperreg:prop} and Theorem~\ref{intrinsic:th}, and then specialized in  Corollary~\ref{singularequilibrium:cor} to the case of equilibrium points.

In section~\ref{route:sec}, we apply our results to the non holonomic car. We build an explicit Lie-B\"acklund atlas for this model, compute the set of intrinsic singularities and apply the atlas construction to trajectory planning where the route contains several apparent singularities and starts and ends at intrinsically singular points. Finally, conclusions are drawn in section~\ref{concl:sec}.

%%%%%%%%%%%%%%%%%%%%%%%%%%%%
%%%%%%%%%%%%%%%%%%%%%%%%%%%%
\section{Lie-B\"acklund Atlas, Apparent and Intrinsic Singularities}
\label{sec::atlas}

Recall from the introduction that we consider the controlled dynamical system in explicit form \eqref{eq::explicit_form}, where $x$ evolves in some $n$-dimensional manifold $X$. The control input $u$ lies in $\R^m$. Then the system can be seen as the zero set of $\dot{x} - f(x,u)$ in $\T X \times \R^m$, where $\T X$ is the tangent bundle of $X$. From now on, we assume that the Jacobian matrix $\frac{\partial f}{\partial u}(x,u)$ has rank $m$ for every $(x,u)$.

Converting system \eqref{eq::explicit_form} into its implicit form consists in eliminating the input $u$ or, more precisely, in computing its image by the projection $\pi$ from $\T X \times \R^m$ onto $\T X$ to get the implicit relation \eqref{eq::implicit_form},
where we assume that $F: (x,\dot{x}) \in \T X \mapsto \R^{n-m}$ is a meromorphic function, with $m \leq n$. 

Following~\cite{Levine-09,Levine-11}, we embed the state space associated to \eqref{eq::implicit_form} into a diffiety (see~\cite{Zharinov-92}), i.e.  into the manifold $\X \triangleq X \times \R^n_\infty$, where we have denoted by $\R^n_\infty$ the product of a countably infinite number of copies of $\R^n$, with coordinates $\overline{x} \triangleq (x,\dot{x},\ddot{x},\ldots, x^{(k)}, \ldots)$, endowed with the trivial Cartan field: 
$$\tau_{\X} \triangleq \sum_{i=1}^n \sum_{j \geq 0} x_i^{(j+1)} \frac{\partial}{\partial x_i^{(i)}}.$$ 
Note that $\tau_{\X}$ is such that the elementary relations $\tau_{\X} x^{(k)}=x^{(k+1)}$ hold for all $k \in \N$.
The integral curves of both \eqref{eq::explicit_form} and \eqref{eq::implicit_form} thus belong to the zero set of $\{F,\tau_{\X}^{k} F \mid k \in \N\}$ in $\X$. However, there might exist points $\overline{x} =  (x,\dot{x},\ddot{x},\ldots, x^{(k)}, \ldots) \in \X$ such that the fiber $\pi^{-1}(x,\dot{x})$ above $\overline{x}$ is empty, \ie such that there does not exist a $u\in \R^m$ such that $\dot{x}-f(x,u)=0$. We indeed naturally exclude such points. It is easily proven that the integral curves of \eqref{eq::explicit_form} and \eqref{eq::implicit_form} coincide on the set $\X_0$ given by
$$\X_0 = \{\overline{x} \in \X \mid \tau_{\X}^k F(\overline{x}) = 0, \forall k \in \N\} \setminus \{\overline{x} \in \X \mid \pi^{-1}(x,\dot{x}) = \emptyset \}.$$ 
Therefore, the system trajectories are uniquely defined by the triple $(\X, \tau_{\X},F)$ that we call \emph{the system} from now on (see \cite{Levine-09}). Without loss of generality, we may consider that this system is restricted to $\X_0$. 

In order to get rid of any reference to an explicit system, such as the complementary of the empty fibers of the projection $\pi$, we more generally assume that  $\X_0$ is an open dense subset\footnote{As a consequence of the implicit function theorem, the set of points where the fibers are empty is the complement of an open dense subset of the set $\{\overline{x} \in \X \mid \tau_{\X}^k F(\overline{x}) = 0, \forall k \in \N\}$.} of $\{\overline{x} \in \X \mid \tau_{\X}^k F(\overline{x}) = 0, \forall k \in \N\}$.

Let us recall the definitions of Lie-B\"acklund equivalence and local flatness for implicit systems (\cite{Levine-09,Levine-11}):

Consider two systems $(\X, \tau_{\X},F)$ and $(\Y, \tau_{\Y},G)$ where $\Y\triangleq Y\times \R^{q}_{\infty}$, $Y$ being a $q$-dimensional smooth manifold, where $qÊ\in\N$ is arbitrary, with global coordinates $\ol{y} \triangleq (y,\dot{y}, \ldots)$ and trivial Cartan field $\tau_{\Y} \triangleq \sum_{i=1}^q \sum_{j \geq 0} y_i^{(j+1)} \frac{\partial}{\partial y_i^{(j)}}$. As before, we denote
 by $\Y_{0}$ an open dense subset of $\{ \ol{y}\in \Y \mid \tau_{\Y}^{k}G(\ol{y})=0,~\forall k\in \N \}$.

\begin{definition}\label{L-B-equi:def}
We say that $(\X, \tau_{\X},F)$ and $(\Y, \tau_{\Y},G)$ are Lie-B\"acklund equivalent at a pair of points $(\ol{x}_0, \ol{y}_0)\in \X_0\times\Y_0$ if, and only if,
\begin{itemize}
%%%%%%
\item[(i)] there exist neighborhoods $\XX_{0}$ of  $\ol{x}_{0}$ in $\X_{0}$, and $\YY_{0}$ of $\ol{y}_{0}$ in $\Y_{0}$, and a one-to-one mapping $\Phi=(\varphi_{0},\varphi_{1},\ldots )$, meromorphic from $\YY_{0}$ to $\XX_{0}$, satisfying 
$\Phi(\ol{y}_{0})=\ol{x}_{0}$ and such that the restrictions of the trivial Cartan fields ${\tau_{\Y}}_{\big \vert \YY_{0}}$ and ${\tau_{\X}}_{\big \vert \XX_{0}}$ are $\Phi$-related, namely 
$\Phi_{\ast}{\tau_{\Y}}_{\big \vert \YY_{0}}={\tau_{\X}}_{\big \vert \XX_{0}}$;
%%%%%%
\item[(ii)] there exists a one-to-one mapping $\Psi=(\psi_{0},\psi_{1},\ldots )$, meromorphic from $\XX_{0}$ to $\YY_{0}$, such that $\Psi(\ol{x}_{0})=\ol{y}_{0}$ and $\Psi_{\ast}{\tau_{\X}}_{\big \vert \XX_{0}}={\tau_{\Y}}_{\big \vert \YY_{0}}$.
%%%%%%
\end{itemize}
The mappings $\Phi$ and $\Psi$ are called \emph{mutually inverse Lie-B\"acklund isomorphisms} at $(\ol{x}_{0},\ol{y}_{0})$.

The two systems $(\X, \tau_{\X},F)$ and $(\Y, \tau_{\Y},G)$ are called \emph{locally L-B equivalent} if they are L-B equivalent at every pair $(\ol{x}, \Psi(\ol{x}))=(\Phi(\ol{y}),\ol{y})$ of an open dense subset $\ZZ$ of $\X_{0}\times \Y_{0}$, with $\Phi$ and $\Psi$ mutually inverse Lie-B\"acklund isomorphisms on $\ZZ$.
\end{definition}

Accordingly, 
\begin{definition}\label{flatness:def}
The system $(\X, \tau_{\X},F)$ is said (differentially) flat at $\ol{x}_{0}$ if, and only if, it is Lie-B\"acklund equivalent to the trivial system $(\R^{m}_{\infty}, \tau,0)$ at $(\ol{x}_{0},\ol{y}_{0})$ where $\tau$ is the trivial Cartan field on $\R^m_{\infty}$ with global coordinates\footnote{The number of components of $y$ must be equal to $m$ (see \cite{FLMR_99,Levine-09}).} $\ol{y}= (y,\dot{y},\ldots)$, \ie
$\tau = \sum_{i=1}^{m}\sum_{j\geq 0} y_{i}^{(j+1)}\frac{\partial}{\partial y_{i}^{(j)}}$, and where $0$ indicates that there is no differential equation to satisfy. In this case, we say that $y$, or $\Psi$ by extension, is a \emph{local flat output}, well-defined and invertible from a neighborhood of $\ol{x}_{0}$ to a neighborhood of $\ol{y}_0$. 

Finally, the system $(\X, \tau_{\X},F)$ is said locally (differentially) flat if it is flat at every point of an open dense subset $\ZZ$ of $\X_{0}\times \R^{m}_{\infty}$.
\end{definition}

%================================ 
\subsection{Lie-B\"acklund Atlas}\label{LB-atlas:subsec}
From now on, we assume that system~\eqref{eq::explicit_form}, or equivalently \eqref{eq::implicit_form} or, also equivalently, system $(\X, \tau_{\X},F)$ is locally flat.

We now introduce the notion of a Lie-B\"acklund atlas for flat systems. It consists of a collection of charts on $\X_0$, that we call \emph{Lie-B\"acklund charts and atlas}, and that will allow us to define a structure of infinite dimensional manifold on a subset of $\X_0$, that can be $\X_0$ itself is some cases.

\begin{definition}
\begin{itemize}
\item[(i)] A \emph{Lie-B\"acklund chart} on $\X_0$ is the data of a pair $(\U, \psi)$ where $\U$ is an open set of $\X_0$ and $\psi : \U \rightarrow \R^m_{\infty}$ a local flat output,  with local inverse $\varphi: \V \rightarrow \U$ with $\V$ open subset of $\psi(\U)\subset \R^{m}_{\infty}$. 
\item[(ii)] Two charts $(\U_1,\psi_1)$ and $(\U _2,\psi_2)$ are said to be \emph{compatible} if, and only if, the mapping
$$\psi_{1}\circ \varphi_{2}: \psi_{2}(\varphi_{1}(\V_1)\cap \varphi_2(\V_2) ) \subset \R^m_{\infty} \rightarrow \psi_{1}(\varphi_{1}(\V_1)\cap \varphi_2(\V_2) ) \subset \R^m_{\infty}$$ 
is a local Lie-B\"acklund isomorphism (with the same trivial Cartan field $\tau$ associated to both the source and the target) with local inverse $\psi_{2}\circ \varphi_{1}$, as long as
$\varphi_{1}(\V_1)\cap \varphi_2(\V_2) \neq \emptyset$.
\item[(iii)] An \emph{atlas} $\mathfrak{A}$ is a collection of compatible charts. 
\end{itemize}
\end{definition}

For a given atlas $\mathfrak{A} = (\U_i,\psi_i)_{i \in I}$, let $\mathfrak{U}_\mathfrak{A}$ be the union $\mathfrak{U}_\mathfrak{A}\triangleq \bigcup_{i \in I} \U_i$. 

Here our definition differs from the usual concept of atlas in finite dimensional differential geometry, since, on the one hand, diffeomorphisms are replaced by Lie-B\"acklund isomorphisms and, on the other hand, we do not require that $\mathfrak{U}_\mathfrak{A} = \X_0$. The reason for this difference is precisely related to our objective, i.e. identifying the essential singularities of differentially flat  systems. This will become clear in the sequel.  

%=========================================================
\subsection{Apparent and Intrinsic Flatness Singularities}
It is clear from what precedes that if we are given two Lie-B\"acklund atlases, their union is again a Lie-B\"acklund atlas. Therefore the union of all charts that form every atlas is well-defined as well as its complement, which we call the set of intrinsic flatness singularities, as stated in the next definition.

\begin{definition}
\label{def::intrinsic-singularities}
We say that a point in $\X_0$ is an \emph{intrinsic flatness singularity} if it is excluded from all charts of every Lie-B\"acklund atlas. Every other singular point, namely every point $\bar{x}\not\in \U_i$ for some chart $(\U_i,\psi_i)$ but for which there exists another chart $(\U_j,\psi_j)$, $j\neq i$, such that $\bar{x}\in \U_j$, is called \emph{apparent}.
\end{definition}

Clearly, this notion does not depend on the choice of atlas and charts. The concrete meaning of this notion is that at points that are intrinsic singularities there is no flat output, \ie the system is not flat at these points. 

On the other hand, points that are apparent singularities are singular for a given set of flat outputs, but well defined points for another set of flat outputs. 

Note, moreover, that obtaining atlases may be very difficult in  general situations and a computable criterion to directly detect intrinsic singularities should be of great help. A simple result in this direction is presented in the following section~\ref{sec::criterion-intrinsic-singularities}.

%=================================================================
\subsection{Intrinsic Flatness Singularities and Hyper-regularity}
\label{sec::criterion-intrinsic-singularities}

The purpose of this section is to give a tractable sufficient condition of intrinsic singularity and an algorithm to effectively compute the associated points.

With the notations defined at the beginning of section~\ref{sec::atlas}, we next consider the variational equation, in polynomial form, of system \eqref{eq::implicit_form}:

\begin{equation}\label{polymatgen:eq}
P(F) dx = 0, \quad P(F) = \frac{\partial  F}{\partial x} + \frac{\partial  F}{\partial \dot{x}} \tau_{\X}
\end{equation}
where the entries of the $(n-m)\times n$ matrix $P( F)$ are polynomials in $\tau_{\X}$ with meromorphic functions on $\X$ as coefficients.

Recall that a square $n\times n$ polynomial matrix is said to be \emph{unimodular} if it is invertible and if its inverse is also a matrix whose entries are polynomials in $\tau_{\X}$ with meromorphic functions on $\X$ as coefficients. It is of importance to remark that, according to the fact that the coefficients are meromorphic functions, they are, in general, only locally defined. This local dependence will be omitted unless explicitly needed.

The $(n-m)\times n$ polynomial matrix $P(F)$ is said \emph{hyper-regular} if, and only if, there exists a $(n-m)\times (n-m)$ unimodular  polynomial matrix $V$ and a $n\times n$ unimodular polynomial matrix $U$ such that
\begin{equation}\label{hyper-reg0:eq}
VP(F)U= \left( \begin{array}{cc} I_{n-m}&0_{(n-m)\times m}\end{array}\right).
\end{equation}

In fact, it has been proven in \cite{Antritter-Middeke-09} (see also \cite[Proposition 1]{ACLM-scl}), that the latter definition may be simplified as follows:
\begin{proposition}
The polynomial matrix $P(F)$ is hyper-regular if, and only if, there exists a $n\times n$ unimodular polynomial matrix $U$ such that
\begin{equation}\label{hyper-reg:eq}
P(F)U= \left( \begin{array}{cc} I_{n-m}&0_{(n-m)\times m}\end{array}\right).
\end{equation}
\end{proposition}
 \begin{proof}
$P(F)$ is hyper-regular if, and only if, there are matrices $S$, of size $(n-m)\times (n-m)$ and $T$ of size $n\times n$ such that $SP(F)T =  \left( \begin{array}{cc} I_{n-m}&0_{(n-m)\times m}\end{array}\right)$. Thus, using the identity
\begin{equation*}
\left( \begin{array}{cc} I_{n-m}&0_{(n-m)\times m}\end{array}\right)
  = S^{-1} \left( \begin{array}{cc} I_{n-m}& 0_{(n-m)\times m} \end{array}\right)
\left( \begin{array}{cc} S & 0_{(n-m)\times m} \\ 0_{m\times(n-m)} & I_{m} \end{array}\right)
\end{equation*}
we get
\begin{equation*}
 \left( \begin{array}{cc} I_{n-m}&0_{(n-m)\times m}\end{array}\right) = S^{-1} (SP(F)T) 
 \left( \begin{array}{cc} S & 0 \\ 0 & I_{m} \end{array}\right)
  = P(F)
  \Bigl( T  \left( \begin{array}{cc} S & 0 \\ 0 & I_{m} \end{array}\right)\Bigr) \triangleq P(F)U
\end{equation*}
which proves \eqref{hyper-reg:eq}. The converse is trivial 
\end{proof} 
We say that $P(F)$ is \emph{hyper-singular} at a given point if, and only if, it is not hyper-regular at this point, i.e. if this point does not belong to any neighborhood where $P(F)$ is hyper-regular or, in other words, if at this point no unimodular matrix $U$ satisfying \eqref{hyper-reg:eq} exists.

 Let us denote by $\Ss_{F}$ the subset of $\X_0$ where $P(F)$ is  hyper-singular.  
 The following proposition clarifies some previous results of \cite{Levine-09,Levine-11} in the context of flat systems at a point:
 
 \begin{proposition}\label{hyperreg:prop}
 If system \eqref{eq::implicit_form} is flat at the point $\ol{x}_0 \in \X_0$, then there exists a neighborhood $V$ of $\ol{x}_0$ where $P(F)$ is hyper-regular.
 \end{proposition}
 \begin{proof}
 Assume that system \eqref{eq::implicit_form} is flat at the point $\ol{x}_0\in \X_0$. Then, denoting as before $\ol{y} \triangleq (y,\dot{y}, \ddot{y},\ldots)$ and $\ol{x} \triangleq (x,\dot{x}, \ddot{x},\ldots)$, by definition, there exists a neighborhood $V$ of $\ol{x}_0$ and a flat output $\ol{y} = \Psi(\ol{x}) \triangleq (\Psi_0(\ol{x}), \Psi_1(\ol{x}), \Psi_2(\ol{x}),\ldots) \in \Psi(V) \subset \R^m_{\infty}$ for all $\ol{x} \in V$ and conversely, $\ol{x} = \Phi(\ol{y})\triangleq (\Phi_0(\ol{y}), \Phi_1(\ol{y}), \Phi_2(\ol{y}),\ldots)$ for all $\ol{y} \in \Psi(V)$ such that 
 $F(\Phi_0(\ol{y}), \Phi_1(\ol{y})) = F(\Phi_0(\ol{y}), \tau\Phi_0(\ol{y})) \equiv 0$.

 Taking differentials, we show that $dy$ is a flat output of the variational system. 
 Considering the Jacobian matrix $d\Phi_0(\ol{y})$ (resp. $d\Psi_0(\ol{x})$) of the 0th order component $\Phi_0$ (resp. $\Psi_0$) of $\Phi$ (resp. $\Psi$), we denote by $P(\Phi_0)$ (resp. $P(\Psi_0)$) its polynomial matrix form with respect to $\tau$ (resp. w.r.t. $\tau_{\X}$) (see \cite{Levine-09,Levine-11}).
 
 Since $d\ol{y} = d\Psi(\ol{x})d\ol{x}$ and $d\ol{x}= d\Phi(\ol{y})d\ol{y}$, we get that $dx = P(\Phi_0) dy \in \T^{\ast}V$,  $dy= P(\Psi_0)dx \in \T^{\ast}\Psi(V)$, $P(F)P(\Phi_0)\equiv 0$ and $P(\Phi_0)$ left-invertible, since $P(\Psi_0)P(\Phi_0)= I_m$.
 
 We next consider the Smith-Jacobson decomposition, or diagonal decomposition \cite[Chap. 8]{Cohn}, of $P(F)$: there exists an $(n-m)\times (n-m)$ unimodular matrix $W$, an $n\times n$ unimodular matrix $U$ and an $(n-m)\times(n-m)$ diagonal matrix $\Delta$ such that $WP(F)U= \left( \begin{array}{cc}\Delta&0\end{array}\right)$. Partitionning $U$ into $\left(\begin{array} {cc} U_1&U_2\end{array}\right)$, we indeed get $WP(F)U_1=\Delta$ and $WP(F)U_2=0$, or $P(F)U_2=0$ since $W$ is unimodular. Thus, by elementary matrix algebra, taking account of the independence of the columns of both $U_2$ and $P(\Phi_0)$, one can choose $U$ such that $U_2=P(\Phi_0)$. 
 
Following \cite{Fl-scl2,Levine-11} (see also \cite{ACLM-scl} in a more general context), we introduce the \emph{free} differential module $\kk[dy]$ finitely generated by $dy_1,\ldots, dy_m$ over the ring $\kk$ of meromorphic functions from $\X_0$ to $\R$ and the differential quotient module $\hh \triangleq \kk[dx] / \kk[P(F)dx]$ where $\kk[P(F)dx]$ is the differential module generated by the rows of $P(F)dx$.
Taking an arbitrary non zero element $z= (z_1, \ldots, z_m)$ in $\kk[dy]$, and its image $\xi= P(\Phi_0)z$, we immediately get $P(F)\xi= P(F)P(\Phi_0)z= 0$ which proves that $\xi$ is equivalent to zero  in $\hh$. Since $U = \left(\begin{array} {cc} U_1&P(\Phi_0)\end{array}\right)$ is unimodular, it admits an inverse $V= \left(\begin{smallmatrix} V_1\\V_2\end{smallmatrix}\right)$ and thus $U_1V_1 + P(\Phi_0)V_2= I_n$. Multiplying on the left by $WP(F)$ and on the right by $\xi$, and using the relation $P(F)P(\Phi_0)= 0$, we get $0= WP(F)\xi= WP(F)U_1V_1\xi + WP(F)P(\Phi_0)V_2\xi=WP(F)U_1V_1\xi$. Consequently, recalling that $WP(F)U_1 = \Delta$, we have that $\zeta \triangleq V_1\xi = V_1P(\Phi_0)z$ satisfies $0= WP(F)U_1\zeta= \Delta \zeta$. Consequently, if the entries of the diagonal matrix $\Delta$ contain at least one polynomial of degree larger than 0 with respect to $\tau$, say $\delta_i$ for some $i = 1, \ldots, n-m$, then $\delta_i \zeta_i=0$, and since $\zeta_i\in \kk[dy]$, we have proven that the non zero component $\zeta_i$ is a torsion element of $\kk[dy]$, thus leading to a contradiction with the fact that $\kk[dy]$ is free (see \eg \cite[Theorem 7.3, Chap. III]{Lang_02} or \cite[Corollary 2.2, Chap. 8, Sec. 8.2]{Cohn}). Therefore, the entries of the matrix $\Delta$ must belong to $\kk$, which implies that there exists a submatrix $U'_1$ such that $U' \triangleq \left(\begin{array}{cc}U'_1&P(\Phi_0)\end{array}\right)$ is unimodular and satisfies $WP(F)U'= \left( \begin{array}{cc}I_{n-m}&0\end{array}\right)$, and thus, according to \cite{Antritter-Middeke-09} or \cite[Proposition 1]{ACLM-scl}, that $P(F)$ must be hyper-regular in the considered neighborhood.
\end{proof} 
 
 \begin{rem}
 The above proof may be summarized by the following diagram of exact sequences:
$$
 \begin{array}{ccccccc}
0&\longrightarrow&\R^m_{\infty}& \begin{array}{c}{\scs \Phi}\vspace{-0.5em}\\\longrightarrow\vspace{-0.7em}\\\longleftarrow\vspace{-0.5em}\\{\scs \Psi}\end{array}&\X_0& \stackrel{\scs{F}}{\longrightarrow}&0
\vspace{-0.3em}
\\
&&\hspace{-0.9em}{\scs d} \downarrow& &\hspace{0.5em}\downarrow {\scs d}&&
\vspace{-0.3em}
\\
0&\longrightarrow&\T\R^m_{\infty}&\begin{array}{c} {\scs d\Phi}\vspace{-0.5em}\\\longrightarrow\vspace{-0.7em}\\\longleftarrow\vspace{-0.5em}\\{\scs d\Psi}\end{array}&\T\X_0&\stackrel{\scs{P(F)}}{\longrightarrow} &0
\end{array}
$$

Since $\T\R^m_{\infty}$, is isomorphic to the free differential module  $\kk[dy]$, then $\T\X_0$, that may also be seen as a differential module, is necessarily free. In other words, the kernel of $P(F)$ must be equal to the image of $\T\R^m_{\infty}$ by the one-to-one linear map $d\Phi$, thus sending a basis of $\T\R^m_{\infty}$ (flat outputs) to a basis of $\T\X_0$.
\end{rem}

 \begin{rem}
Due to the Smith-Jacobson decomposition, the hyper-regularity property gives a practical row-reduction algorithm to compute $\Ss_{F}$ (see \cite{Antritter-Middeke-09} and the car example in section~\ref{intrinsic-subsec} below). The hyper-singular set is then deduced by complementarity.
 \end{rem}
 According to Proposition~\ref{hyperreg:prop}, it is clear that on $\Ss_{F}$, the system cannot be flat.
 We thus have the following straightforward result:
 
\begin{theorem}\label{intrinsic:th}
The set $\Ss_{F}$ is contained in the set of flatness intrinsic singularities of the system.
\end{theorem}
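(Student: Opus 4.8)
The plan is to prove the inclusion in contrapositive form. Instead of starting from an intrinsic singularity, I would take a point $\xx_0 \in \X_0$ that does \emph{not} lie in $\Ss_F$ and show that it cannot be intrinsic, \ie that it belongs to some Lie-B\"acklund chart. By definition, $\xx_0 \notin \Ss_F$ means that $P(F)$ is hyper-regular on an entire neighborhood $V$ of $\xx_0$, so there is a unimodular $U = \left(\begin{array}{cc} U_1 & U_2\end{array}\right)$ over the ring of polynomials in $\tau_{\X}$ with meromorphic coefficients such that $P(F)U = \left(\begin{array}{cc} I_{n-m} & 0\end{array}\right)$ on $V$. The last $m$ columns then satisfy $P(F)U_2 = 0$ and freely generate $\ker P(F)$, so $dx = U_2\, dy$ parametrizes the variational system \eqref{polymatgen:eq} on $V$ and furnishes a \emph{variational} flat output that is regular (pole-free) at $\xx_0$.

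The remaining, and decisive, task is to upgrade this variational flat output to a genuine one: to produce a meromorphic map $\psi$, defined and non-degenerate at $\xx_0$, whose differential is a basis of $\ker P(F)$; a Poincar\'e lemma for the trivial Cartan field would then integrate a closed such basis into $\psi$, and $(\U,\psi)$ with $\U \ni \xx_0$ would be a chart witnessing that $\xx_0$ is not intrinsic. I would feed in the standing hypothesis that the system is locally flat: there is a flat output on an open dense subset $\ZZ \subset \X_0$, and on the dense set $V \cap \ZZ$ one may, as in the proof of Proposition~\ref{hyperreg:prop}, take $U_2 = P(\Phi_0)$, for which the associated $dy$ is \emph{exact}. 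Thus $\ker P(F)$ already admits a closed generating family on a dense subset of $V$; the goal is to show such a family can be chosen regular at $\xx_0$ as well.

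The hard part will be exactly this bridge, from a \emph{regular} basis of $\ker P(F)$ at $\xx_0$ (supplied by hyper-regularity) to a regular \emph{closed} one. On the flat locus the closed basis is $P(\Phi_0)$, which may blow up at $\xx_0$, while the hyper-regular $U_2$ is pole-free there; the two differ by a change of basis $M$ over the polynomial ring in $\tau_{\X}$ that need not preserve exactness, so one cannot simply transport the exactness of $dy$ along $M$. I expect the argument to hinge on the fact that the integrability (closedness) conditions are the vanishing of a finite family of meromorphic functions on $V$, holding throughout the dense subset $V \cap \ZZ$; the crux is to show that hyper-regularity prevents these functions from acquiring a singularity at $\xx_0$, so that by the identity principle for meromorphic functions they persist at $\xx_0$ and yield a flat output regular there. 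In other words, the whole inclusion rests on establishing that, under the local-flatness hypothesis, hyper-regularity is not merely necessary (Proposition~\ref{hyperreg:prop}) but, away from $\Ss_F$, also sufficient for the existence of a local flat output, \ie that no integrability singularity can occur at a hyper-regular point.
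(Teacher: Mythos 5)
Your proposal does not follow the paper's proof, and it leaves open exactly the step that would be needed to make it work. The paper's entire argument for Theorem~\ref{intrinsic:th} is a contraposition of Proposition~\ref{hyperreg:prop}: by that proposition, at any point of $\Ss_F$ the system cannot be flat in any neighbourhood, hence no flat output and no Lie-B\"acklund chart exists there, and the point is intrinsic by Definition~\ref{def::intrinsic-singularities}. No integration of a variational flat output is attempted anywhere. Note that what this two-line argument literally delivers is the inclusion $\Ss_F \subseteq \{\text{intrinsic singularities}\}$, whereas you have read the word ``contains'' in the statement as $\{\text{intrinsic singularities}\} \subseteq \Ss_F$ and are therefore trying to prove the \emph{converse} of Proposition~\ref{hyperreg:prop} --- that hyper-regularity at $\xx_0$ forces the existence of a flat output regular at $\xx_0$. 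That is a strictly stronger claim than anything the paper establishes, which is why your route is so much harder than the one the authors take.

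The step you leave open is not a technicality, and it cannot be closed the way you sketch. Hyper-regularity is only the necessary half of the flatness characterization in \cite{Levine-11}; sufficiency requires in addition that some basis of $\kernel\, P(F)$ satisfy integrability (closedness) conditions, so ``hyper-regular $\Rightarrow$ flat'' is false in general. Your proposed repair --- that the closedness conditions hold on the dense flat locus $V \cap \ZZ$ and persist at $\xx_0$ by the identity principle --- only propagates the closedness of a \emph{fixed} meromorphic basis such as $P(\Phi_0)$, which may genuinely have a pole at $\xx_0$; it gives no control over whether some \emph{other} basis of $\kernel\, P(F)$, obtained from $P(\Phi_0)$ by a change of basis $M$ that is itself allowed to be singular at $\xx_0$, can be chosen simultaneously closed and pole-free there. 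Until that bridge is built (and the paper does not build it either), your contrapositive argument does not prove the stated inclusion. If instead you prove the inclusion that the paper's own reasoning supports --- every hyper-singular point is an intrinsic singularity --- the proof collapses to an immediate consequence of Proposition~\ref{hyperreg:prop} and Definition~\ref{def::intrinsic-singularities}, with none of the machinery you deploy.
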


In fact (see \cite{Fl-scl2,Levine-09}), $\Ss_{F}$ corresponds to the points where the system is no more  \mbox{F-controllable}, i.e. controllable in the sense of free modules, and therefore non flat (see \cite{CLM_91,FLMR_95,FLMR_99,Levine-09}). 
As a consequence of this theorem, the points where the matrix $P(F)$ is hyper-singular are automatically intrinisic singularities of the system.

Note that, at equilibrium points, F-controllability boils down to first order controllability, \ie controllability of the tangent linear system. 
\begin{corollary}\label{singularequilibrium:cor}
The set made of equilibrium points that are not first order controllable is contained in the set of flatness intrinsic singularities of the system.
\end{corollary}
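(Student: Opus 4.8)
The plan is to derive the claimed containment as a short chain of inclusions, combining Theorem~\ref{intrinsic:th} with the two facts recalled immediately above the statement: that $\Ss_{F}$ is exactly the locus where the system fails to be F-controllable (controllability in the sense of free modules), and that at an equilibrium F-controllability reduces to first order controllability of the tangent linear system. No new computation should be needed; the whole content lies in invoking these two ingredients in the correct order. I would begin by fixing the set $E$ of equilibrium points and denoting by $\mathcal{I}$ the set of flatness intrinsic singularities.

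By Theorem~\ref{intrinsic:th} we have $\mathcal{I} \subseteq \Ss_{F}$, and therefore, intersecting with $E$,
$$E \cap \mathcal{I} \subseteq E \cap \Ss_{F}.$$
This already reduces the corollary to showing that every equilibrium point lying in $\Ss_{F}$ fails to be first order controllable. I would then take an arbitrary $\xx_0 \in E \cap \Ss_{F}$ and use the identification of $\Ss_{F}$ recalled in the text (following \cite{Fl-scl2,Levine-09}): since $\xx_0 \in \Ss_{F}$, the system is not F-controllable at $\xx_0$. Because $\xx_0$ is an equilibrium, F-controllability coincides with first order controllability there, so $\xx_0$ is not first order controllable. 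This yields $E \cap \Ss_{F} \subseteq E \cap N$, where $N$ denotes the set of points that are not first order controllable, and chaining the two inclusions gives $E \cap \mathcal{I} \subseteq E \cap \Ss_{F} \subseteq E \cap N$, which is precisely the asserted containment.

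The only genuinely substantive step is the equivalence, at an equilibrium, between F-controllability and first order controllability, which I would treat as established in \cite{CLM_91,FLMR_95,FLMR_99,Levine-09}. If a self-contained justification were wanted, I expect the main obstacle to be showing that $P(F)$ evaluated at an equilibrium degenerates to the matrix pencil attached to the linearized dynamics, so that hyper-singularity of $P(F)$ at that point is equivalent to the failure of the Popov--Belevitch--Hautus (equivalently Kalman) rank condition for the tangent system; this is where one passes from the module-theoretic notion to the classical linear one, and it is the step requiring care. Everything else is elementary set manipulation.
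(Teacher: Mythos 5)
Your argument is exactly the paper's: the corollary is stated there without a separate proof, as an immediate consequence of Theorem~\ref{intrinsic:th} together with the two facts recalled just before it (that $\Ss_{F}$ is the non-F-controllable locus, and that F-controllability reduces to first order controllability at equilibria), which is precisely the chain of inclusions you write down. Your closing remark correctly identifies the one step the paper also delegates to the cited references rather than proving.
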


%%%%%%%%%%%%%%%%%%%%%%%%%%%%%%%
%%%%%%%%%%%%%%%%%%%%%%%%%%%%%%%
\section{Applications: Route Planning For the Non Holonomic Car}
\label{route:sec}

In this section, we show on a specific example how the above carried out theoretical analysis applies. 

%%%%%%%%%%%%%%%%%%%%%%%
\subsection{Car Model}
\label{sec::model}

The car (kinematic) model is made of the following set of explicit differential equations (see \eg \cite{Murray_93}):
\begin{equation}\label{carsys:eq}
\left \{ \begin{array}{ccc}
\dot{x} & = & u \cos \theta \\
\dot{y} & = & u \sin \theta \\
\dot{\theta} & = & \frac{u}{l} \tan \varphi
\end{array} \right.
\end{equation}

\begin{figure}[h!]
\begin{center}
\includegraphics[scale=0.35]{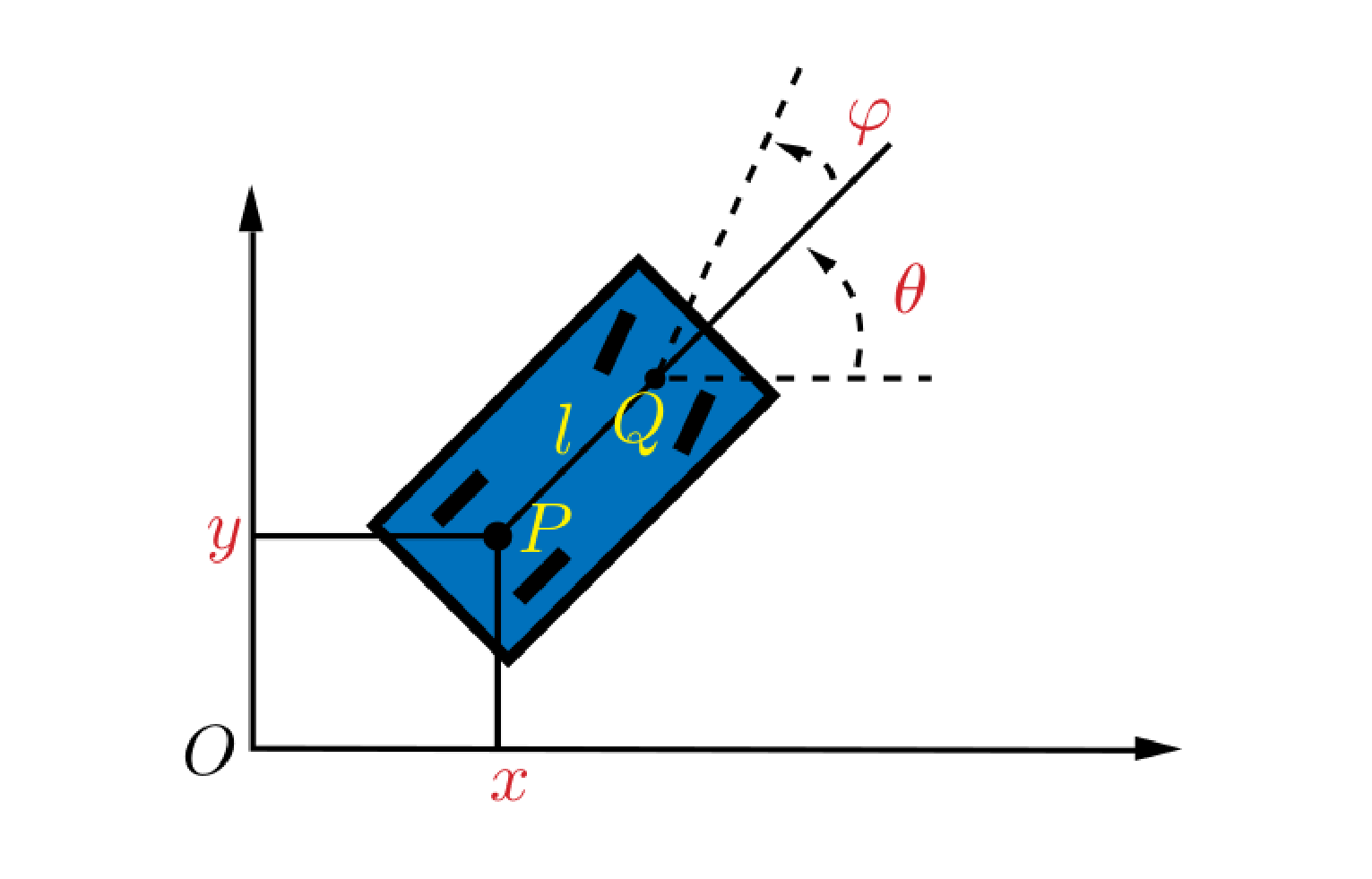}
\caption{Car Model: the state vector is made of the coordinates $(x,y)$ of the rear axle's center and of the angle $\theta$ between the car's axis and the x-axis. The controls are the speed $u$ and the angle $\varphi$ between the wheels' axis and the car's axis. The length $l$ is the distance between the two axles.} \label{fig::car_model}
\end{center}
\end{figure}

Details about the notations are given in the caption of figure~\ref{fig::car_model}. In explicit form, the system evolves in the manifold $\X_1 = \R^2 \times \SP^1 \times \R \times \SP^1$ where the variables are $(x,y,\theta,u,\varphi)$. For the sake of clarity, we note $\X_{11} = \R^2 \times \SP^1$ for the space of state variables $(x,y,\theta)$ and $\X_{12} = \R \times \SP^1$ for the space of control variables $(u,\varphi)$. The tangent bundle of $\X_{11}$ is denoted by $\T \X_{11}$. This system can thus be seen as the zero set in $\T \X_{11} \times \X_{12}$ of the following function:
$$
\mathfrak{F}(x,y,\theta,\dot{x},\dot{y},\dot{\theta},u,\varphi) = \left ( \begin{array}{c}
\dot{x} - u \cos \theta \\
\dot{y} - u \sin \theta \\
\dot{\theta} - \frac{u}{l} \tan \varphi
\end{array} \right )
$$

As in section~\ref{sec::atlas} and again following~\cite{Levine-09,Levine-11}, we consider the local implicit representation of the system, obtained by projecting $\mathfrak F$ on $\T \X_{11}$ by the canonical projection $\pi: \T \X_{11} \times \X_{12} \rightarrow \T \X_{11}$, which amounts to eliminating the controls. In this context, the dynamics \eqref{carsys:eq} are locally equivalent to the zero set of the following function: 
\begin{equation}\label{carimpsys:eq}
F(x,y,\theta,\dot{x},\dot{y},\dot{\theta}) = \dot{x} \sin \theta - \dot{y} \cos \theta = 0.
\end{equation}

We then embed the state space associated to \eqref{carimpsys:eq} into the diffiety $\mathfrak{X} = \R^2 \times \SP^1 \times \R^3_\infty$, endowed with the trivial Cartan field: $\ds \tau_{\X} = \sum_{i=1}^3 \sum_{j \geq 0} x_i^{(j+1)} \frac{\partial}{\partial x_i^{(i)}}$, where we have set $x_1 = x,~ x_2 = y$ and $x_3 = \theta$. 

The system trajectories now live in $\X_0$, the subset of $\{\overline{x}\in \X \mid \tau_{\X}^{k} F =0, \forall k \in \N\}$, where we have excluded the set  $\mathfrak{Z} \triangleq \{(x,y,\theta,\dot{x},\dot{y},\dot{\theta}) \in \T \X_{11}\} \mid \dot{x} = \dot{y} = 0, \dot{\theta} \neq 0\}$ of points of $\T\X_{11}$ where the fibers associated to $\pi$ are empty, \ie the points of $\T \X_{11}$ such that there does not exist $u$ and $\varphi$ such that $F(x, \dot{x})=0$ (see section~\ref{sec::atlas}). Thus
$$\X_0 \triangleq \{\overline{x}\in \X \mid \tau_{\X}^{k} F =0, \forall k \in \N\} \setminus \mathfrak{Z}.$$

\subsection{Lie-B\"acklund Atlas for the Car Model}\label{atlas-car:subsec}

We now define an atlas on $\X_0$ by simply enumerating the charts, as in \cite{CE_14,CE_17} in the context of quadcopters. Each chart is defined on an open set associated to a local Lie-B\"acklund isomorphism $\psi_i$ from $\X_0$ to $\R^2_\infty$ with local inverse denoted by $\phi_i : \R^2_\infty \rightarrow \X_0$. 
For simplicity's sake, we only define $\phi_i$ by its three first components. The other ones are deduced by differentiation, i.e. by applying $\tau_{\X}$ to them an arbitrary number of times. A similar abuse of notation has been used for the definition of $\psi_i$. A point in $\X_0$ is denoted by $\mathfrak{x}$.

\begin{enumerate}
\item Over $U_1 \triangleq \{\dot{x} \neq 0\}$, we take $y_1 = (x,y) = \psi_1(\mathfrak{x})$ and the inverse Lie-B\"acklund transform is given by:
$$
\phi_1 = \left (\begin{array}{c}
x \\ y \\ \tan^{-1}(\frac{\dot{y}}{\dot{x}})
\end{array} \right )
$$

\item Over $U_2 \triangleq \{\dot{y} \neq 0\}$, we take $y_2 = (x,y) = \psi_2(\mathfrak{x})$ and the inverse Lie-B\"acklund transform is given by:
$$
\phi_2 = \left (\begin{array}{c}
x \\ y \\ \cotan^{-1}(\frac{\dot{x}}{\dot{y}})
\end{array} \right )
$$

\item Over $U_3 \triangleq \{\dot{\theta} \neq 0\}$, we take $y_3 = (\theta,x\sin \theta - y\cos \theta) = \psi_3(\mathfrak{x})$. Here for the sake of simplicity, we shall denote $(z_1, z_2)$ the components of $y_3$. In that case the inverse Lie-B\"acklund transform is given by:
$$
\phi_3 = \left (\begin{array}{c}
\frac{\dot{z}_2}{\dot{z}_1} \cos z_1 + z_2 \sin z_1 \\ 
\frac{\dot{z}_2}{\dot{z}_1} \sin z_1 - z_2 \cos z_1  \\
z_1
\end{array} \right )
$$

\item Finally note that the above charts do not contain the set $V = \X_0 \setminus \left( \bigcup_{i=1}^3 U_i \right) = \{\dot{x} = \dot{y} = \dot{\theta} = 0\}$, which corresponds to the set of equilibrium points of the system. Note that, by the definition of $\X_0$, $\dot{x} = \dot{y} = 0$ implies $\dot{\theta} = 0$. Therefore, 
$V= \X_0 \setminus \left( \bigcup_{i=1}^3 U_i \right) = \{\dot{x} = \dot{y} = 0\}$
\end{enumerate}

\

One can check that for all $i,j$, $Im(\phi_i) \subset \X_0$ and that the $\psi_j \circ \phi_i$'s satisfy the compatibility definition of section \ref{LB-atlas:subsec} on $\R^2_\infty$. Therefore we have indeed defined an atlas of $\bigcup_{i=1}^3 U_i  = \X_0 \setminus \{\dot{x} = \dot{y} = 0\}$. Among other things, this allows us to conclude that the car dynamics is globally controllable provided one avoids the singular set $V$, as illustrated in section~\ref{route:sec}.
Note that at this level, we are not able to conclude that the set $\{\dot{x} = \dot{y} = 0\}$ is an intrinsic flatness singularity since, according to definition~\ref{def::intrinsic-singularities} above, we still have to prove that no other atlas can contain this set, hence the importance of the next section based on the results of section~\ref{sec::criterion-intrinsic-singularities}.

\subsection{Flat Outputs and Intrinsic Flatness Singularities of the Car Example}\label{intrinsic-subsec}

One first considers the differential of the implicit equation:
$$
dF = d \dot{x} \sin \theta + \dot{x} \cos \theta d \theta - d \dot{y} \cos \theta + \dot{y} \sin \theta d \theta = (\dot{x} \cos \theta + \dot{y} \sin \theta) d \theta +  \sin \theta d \dot{x} - \cos \theta d \dot{y} 
$$

Note that, if $z$ is an arbitrary variable of the system, we have $d \dot{z} = d (\tau_{\X} z) = \tau_{\X} d z$, \ie the exterior derivative $d$ commutes with the Cartan field $\tau_{\X}$, and the matrix $P(F)$ reads:
$$
P(F) = \left [ \begin{array}{ccc}
(\sin \theta) \tau_{\X} & - (\cos \theta) \tau_{\X} & \dot{x} \cos \theta + \dot{y} \sin \theta
\end{array} \right ] 
$$
thus satisfying 
$$P(F)\left( \begin{array}{c}dx\\dy\\d\theta\end{array}\right)=0$$ 
for all $dx, dy, d\theta$ that are differentials of the variables $x, y, \theta$ satisfying system \eqref{carimpsys:eq}.

Now in the context of the car system given by \eqref{carimpsys:eq}, we are ready to prove the following:

\begin{proposition}\label{intrinsic-car:prop}
The intrinsic singular set of  system \eqref{carimpsys:eq}, given by $\{\dot{x}=\dot{y}=0\}$, is equal to $\Ss_{F}$. 
\end{proposition}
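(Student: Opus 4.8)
The plan is to decide, point by point on $\X_0$, whether the row
$P(F)=\left[(\sin\theta)\tau_{\X}\ \ -(\cos\theta)\tau_{\X}\ \ c\right]$, with $c\triangleq\dot x\cos\theta+\dot y\sin\theta$, is hyper-regular, and then prove the two inclusions $\Ss_{F}\subseteq\{\dot x=\dot y=\dot\theta=0\}$ and $\{\dot x=\dot y=\dot\theta=0\}\subseteq\Ss_{F}$. Since here $n-m=1$, hyper-regularity means precisely that $P(F)$ can be reduced to $[1\ 0\ 0]$ by a unimodular matrix $U$ regular at the point in question.

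I would first settle the hyper-regular locus. Wherever $c\neq 0$, the function $c$ is a unit (a nonvanishing degree-$0$ element) of the ring of $\tau_{\X}$-polynomials with meromorphic coefficients, and since it occupies the degree-$0$ slot it can be used to clear the two differential entries by elementary column operations: right-multiplying $P(F)$ by the unimodular matrix with diagonal $(1,1,c^{-1})$ and third row $\bigl(-c^{-1}(\sin\theta)\tau_{\X},\ c^{-1}(\cos\theta)\tau_{\X},\ c^{-1}\bigr)$ sends $P(F)$ to $[0\ 0\ 1]$, after which a column permutation yields $[1\ 0\ 0]$. Hence $P(F)$ is hyper-regular on $\{c\neq 0\}$, so $\Ss_{F}\subseteq\{c=0\}$. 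To locate $\{c=0\}$ on $\X_0$, I would use the algebraic identity $c^2+F^2=\dot x^2+\dot y^2$, valid on $\T\X_{11}$: on $Z(F)$ it gives $c^2=\dot x^2+\dot y^2$, so that $\{c=0\}\cap Z(F)=\{\dot x=\dot y=0\}$. Recalling that $\X_0$ has been amputated of the set $\mathfrak{Z}=\{\dot x=\dot y=0,\ \dot\theta\neq 0\}$ (which carries no physical meaning, as the car at rest cannot turn), one obtains $\{c=0\}\cap\X_0=\{\dot x=\dot y=\dot\theta=0\}$, the equilibrium set, and therefore $\Ss_{F}\subseteq\{\dot x=\dot y=\dot\theta=0\}$.

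It remains to prove the reverse inclusion, namely that $P(F)$ is hyper-singular at every equilibrium $p_0$, and this is the main obstacle. One cannot simply reuse the matrix $U$ above, whose $c^{-1}$ entries have a pole on the equilibrium set; the point is to show that \emph{no} unimodular $U$ regular at $p_0$ reduces $P(F)$ to $[1\ 0\ 0]$. I would argue by contradiction on the first column $u=(u_1,u_2,u_3)^{\top}$ of such a $U$, which must satisfy the operator identity $(\sin\theta)\tau_{\X}u_1-(\cos\theta)\tau_{\X}u_2+c\,u_3=1$. Comparing the $\tau_{\X}^0$-coefficients gives $\sin\theta\,\tau_{\X}(u_1)_0-\cos\theta\,\tau_{\X}(u_2)_0+c\,(u_3)_0=1$, where $(u_i)_0$ denotes the degree-$0$ coefficient. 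Evaluating at $p_0$, the last term carries the factor $c(p_0)=0$, while the first two carry factors $\tau_{\X}(\cdot)(p_0)$ that vanish as well, since at a genuine equilibrium all jet velocities $x_i^{(j+1)}$ are zero and $\tau_{\X}$ annihilates any function regular at $p_0$; thus the left-hand side vanishes while the right-hand side equals $1$, a contradiction. Hence $p_0\in\Ss_{F}$.

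More conceptually, this last step is the assertion that the tangent linear system at rest is uncontrollable: linearizing \eqref{carsys:eq} at $u=0$ annihilates the steering input $\delta\phi$ and leaves a rank-one input map, so the equilibrium fails first-order controllability and belongs to $\Ss_{F}$ by Corollary~\ref{singularequilibrium:cor}, which I would offer as the clean alternative to the coefficient computation. The two delicate points to pin down are the precise reading of ``hyper-singular at a point'' (absence of a $U$ \emph{regular at the point}, not merely absence of a $U$ over the meromorphic function field, where $c$ is trivially invertible), and the fact that on $\X_0$ the locus $\{\dot x=\dot y=\dot\theta=0\}$ consists of true equilibria, for which $\tau_{\X}$ acts as the zero derivation; together with the two inclusions above these yield $\Ss_{F}=\{\dot x=\dot y=\dot\theta=0\}$.
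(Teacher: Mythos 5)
Your first inclusion $\Ss_{F}\subseteq\{\dot x=\dot y=\dot\theta=0\}$ is correct and is essentially the paper's own computation: an explicit unimodular $U$ exploiting the unit $c=A$ in the degree-zero slot, together with the identity $c^2+F^2=\dot x^2+\dot y^2$ and the removal of $\mathfrak{Z}$. The gap is in the reverse inclusion. Your contradiction rests on the claim that at every point of $\{\dot x=\dot y=\dot\theta=0\}\cap\X_0$ the Cartan field $\tau_{\X}$ annihilates every regular function, i.e.\ that \emph{all} jet coordinates $x_i^{(j+1)}$ vanish there. That is false: this set is the complement of the charts $U_1,U_2,U_3$ and constrains only the first-order coordinates. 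For instance the point with $\theta=0$, $\dot x=\dot y=\dot\theta=0$, $\ddot x=1$ and all other derivatives zero lies in $\X_0$ (every term of $\tau_{\X}^kF$ carries either a factor $y^{(j)}$ with $j\ge 1$, or a factor $\sin\theta$ or $\theta^{(j)}$ with $j\ge 1$, all of which vanish there), yet $\tau_{\X}\dot x=\ddot x=1\neq 0$ at that point. At such a point your degree-zero coefficient equation $\sin\theta\,\tau_{\X}(u_1)_0-\cos\theta\,\tau_{\X}(u_2)_0+c\,(u_3)_0=1$ is in fact solvable with coefficients regular at the point: since $\tau_{\X}c=\ddot x\cos\theta+\ddot y\sin\theta$ is a unit there, taking $(u_1)_0=0$, $(u_2)_0=-c/(\tau_{\X}(c)\cos\theta)$ and $(u_3)_0=-\cos\theta\,\tau_{\X}\bigl(1/(\tau_{\X}(c)\cos\theta)\bigr)$ yields exactly $1$, so the contradiction you derive simply does not arise. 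Your argument therefore only establishes hyper-singularity at the deep equilibria where the derivatives of \emph{all} orders vanish, a proper subset of the claimed set; the points relevant to the route-planning section (the car momentarily at rest but accelerating) are precisely the ones you miss. The paper's proof of this direction is different: it parametrizes the unimodular completions produced by the diagonal decomposition and argues that the entry $1/A$ is unavoidable, so that every such $U$ degenerates exactly on $\{A=0\}$.

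Two further remarks. First, your proposed alternative via Corollary~\ref{singularequilibrium:cor} reverses the stated inclusion: the corollary asserts that the intrinsically singular equilibria are \emph{contained in} the non--first-order-controllable equilibria, not that every equilibrium with an uncontrollable linearization lies in $\Ss_{F}$ or is intrinsically singular; so the rank drop of the input map at $u=0$ cannot be invoked in that direction. Second, your insistence on reading ``hyper-singular at a point'' as the absence of a $U$ \emph{regular at that point} is correct and indeed essential (over the meromorphic function field $c$ is invertible and the question trivializes); that part of your discussion is sound.
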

\begin{proof}
We compute the set where $P(F)$ is not hyper-regular. Let us define 
$$A = \dot{x} \cos \theta + \dot{y} \sin \theta.$$ 
Up to a column permutation, $P(F)$ reads  $[A,(\sin \theta) \tau_{\X},- (\cos \theta) \tau_{\X}]$. Then the first column of $U$, say $u_1$ is $u_1 = [1/A,0,0]^t$ (the superscript $^t$ denotes the transposition operator). The second one $u_2$ is  given by $[P_0,P_1,P_2]^t$ where $P_0, P_1, P_2$ are polynomials of $\tau_{\X}$ with \sloppy$\dg{P_{0}} = 1 + \max_{i=1,2}\dg{P_{i}}$, such that $AP_0 + (\sin \theta) \tau_{\X} P_1 - (\cos \theta) \tau_{\X} P_2 = 0$, or $P_0= -\frac{1}{A}\left( (\sin \theta) \tau_{\X} P_1 - (\cos \theta) \tau_{\X} P_2 \right)$. The third column $u_3$ is obtained in the same way: $u_3 = \left[ P'_0,P'_1,P'_2\right]^t$ with $P'_0= -\frac{1}{A}\left( (\sin \theta) \tau_{\X} P'_1 - (\cos \theta) \tau_{\X} P'_2 \right)$ and $P'_1,P'_2$ such that the matrix 
$$\left[ \begin{array}{cc}P_1&P'_1\\P_2&P'_2\end{array}\right]$$ 
is unimodular.
Therefore every decomposition exhibits at least one singularity defined by the vanishing of $A$. Moreover, it is readily seen that the following 0 degree choice $P_1=\sin\theta$, $P_2=-\cos\theta$, $P'_1= \cos\theta$, $P'_2=\sin\theta$ is such that 
$$U=\left[ \begin{array}{ccc} u_1&u_2&u_3\end{array}\right] = 
\left[\begin{array}{ccc} 
\frac{1}{A}&-\frac{1}{A}\tau_{\X}&\frac{\dot{\theta}}{A}\\
0&\sin\theta&\cos\theta\\
0&-\cos\theta&\sin\theta
\end{array}\right]$$
is singular if, and only if, $A=0$. We thus conclude that $P(F)$ is hyper-regular if and only if $A\not=0$.

Finally, the equation $A = \dot{x} \cos \theta + \dot{y} \sin \theta = 0$, combined with $F = \dot{x} \sin \theta - \dot{y} \cos \theta = 0$ leads to $\dot{x} = \dot{y} = 0$. 
We therefore have shown that $\Ss_F = \{\dot{x} = \dot{y} = 0\}$, in other words that the only obstruction to the hyper-regularity of $P(F)$ is a flat output singularity, hence intrinsic according to Theorem~\ref{intrinsic:th}. 
\end{proof}

Note that this direct computation, from the variational system, of the intrinsic singularity confirms that the atlas construction of section~\ref{sec::atlas} was complete in the sense that adding more charts would not reduce the set of intrinsic singularities.

\begin{rem}
Let us stress that the intrinsic singularity obtained in section \ref{atlas-car:subsec} and the planned trajectory of the next section~\ref{route:sec} do not depend on the choice of atlas and charts. Another choice, using \eg  the formulas given in \cite[Section 6.2.4]{Levine-09} would be equally possible, leading to a similar construction.
\end{rem}

\begin{rem} In this example, we could prove that 
$\Ss_{F}$ is in fact equal to the set of intrinsic singularities of the system.
Indeed, it would be most interesting to have an idea of the generality of this situation.
However, examples where  $\Ss_{F}$ does not coincide with the set of flatness intrinsic singularities of the system are not presently known by the authors.
\end{rem}

\subsection{Route Planning}\label{route:sec}

Next, we show how the previously built atlas can be used to control the car over a route along which there are several apparent and intrinsic singularities, as the one depicted in figure~\ref{fig::car_route}. 

\begin{figure}[h!]
\begin{center}
\includegraphics[scale=0.15]{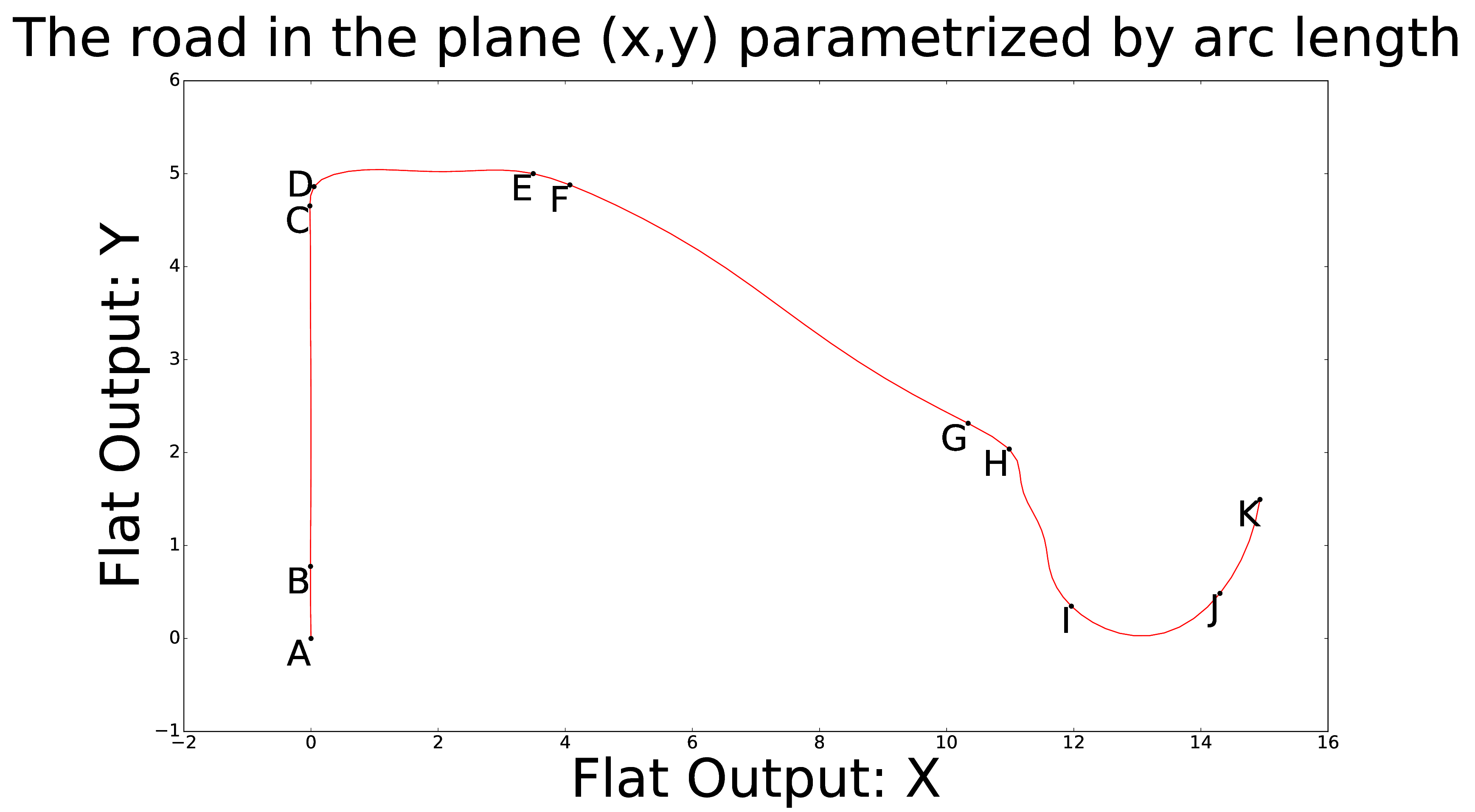}
\caption{Planned car route, parametrized by arc length.} \label{fig::car_route}
\end{center}
\end{figure}

\begin{figure}[h!]
\begin{center}
\includegraphics[scale=0.15]{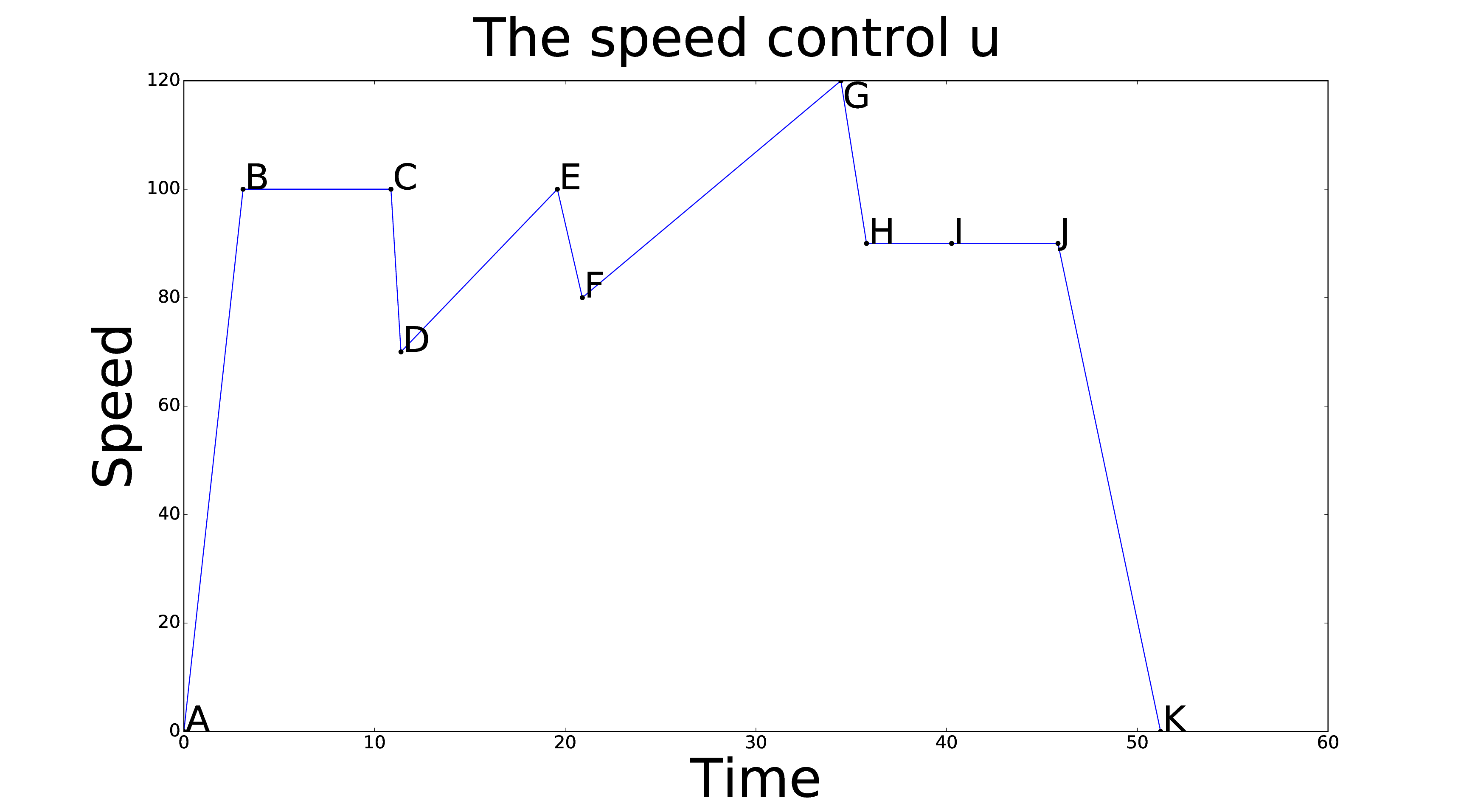}
\caption{The speed 
corresponding to the route depicted in figure~\ref{fig::car_route}} \label{fig::commands}
\end{center}
\end{figure}

\begin{figure}[h!]
\begin{center}
\includegraphics[scale=0.15]{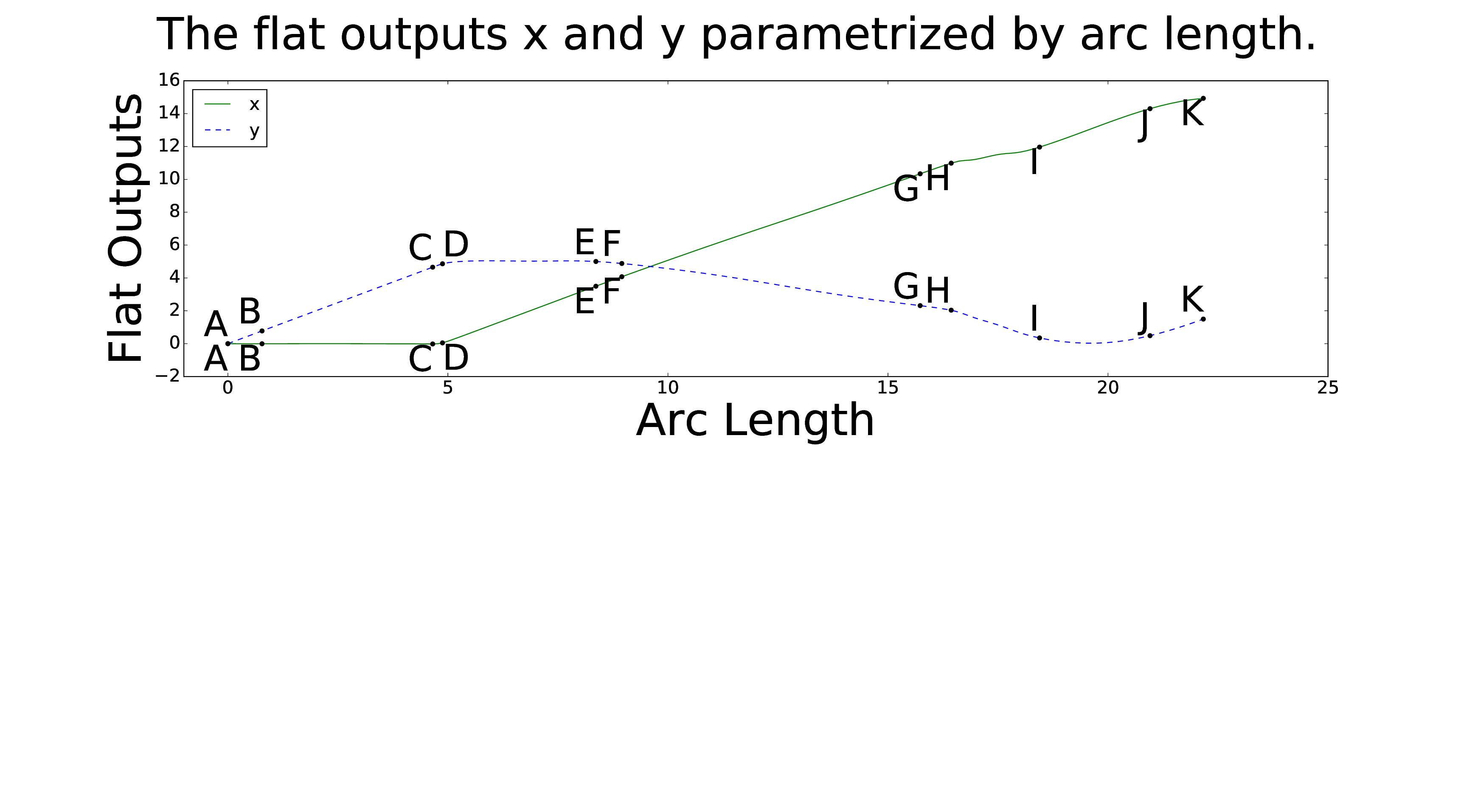}
\includegraphics[scale=0.15]{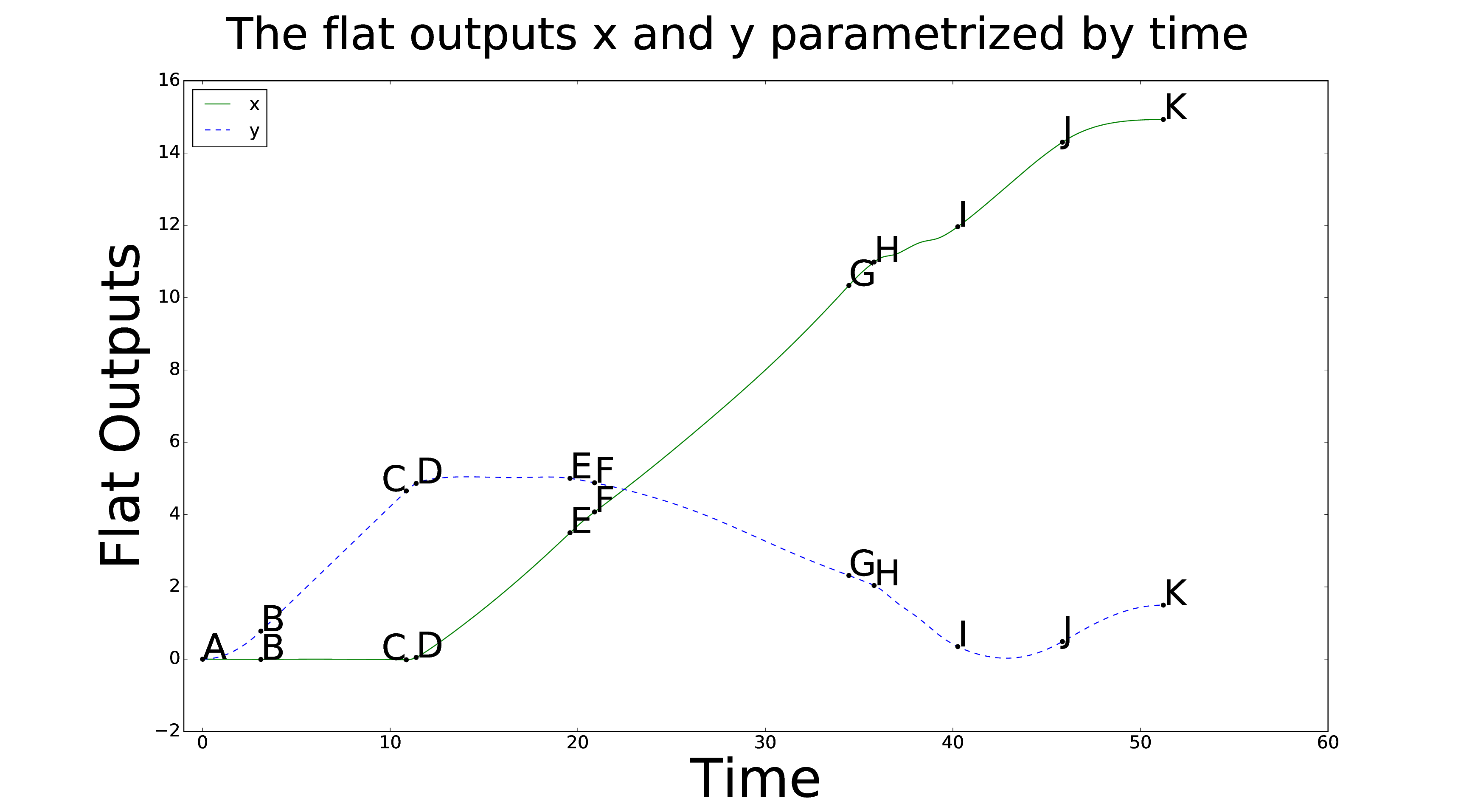}
\caption{The flat outputs parametrized first by arc length and then by time  
corresponding to the route depicted in figure~\ref{fig::car_route}} \label{fig::flat_outputs}
\end{center}
\end{figure}

\begin{figure}[h!]
\begin{center}
\includegraphics[scale=0.15]{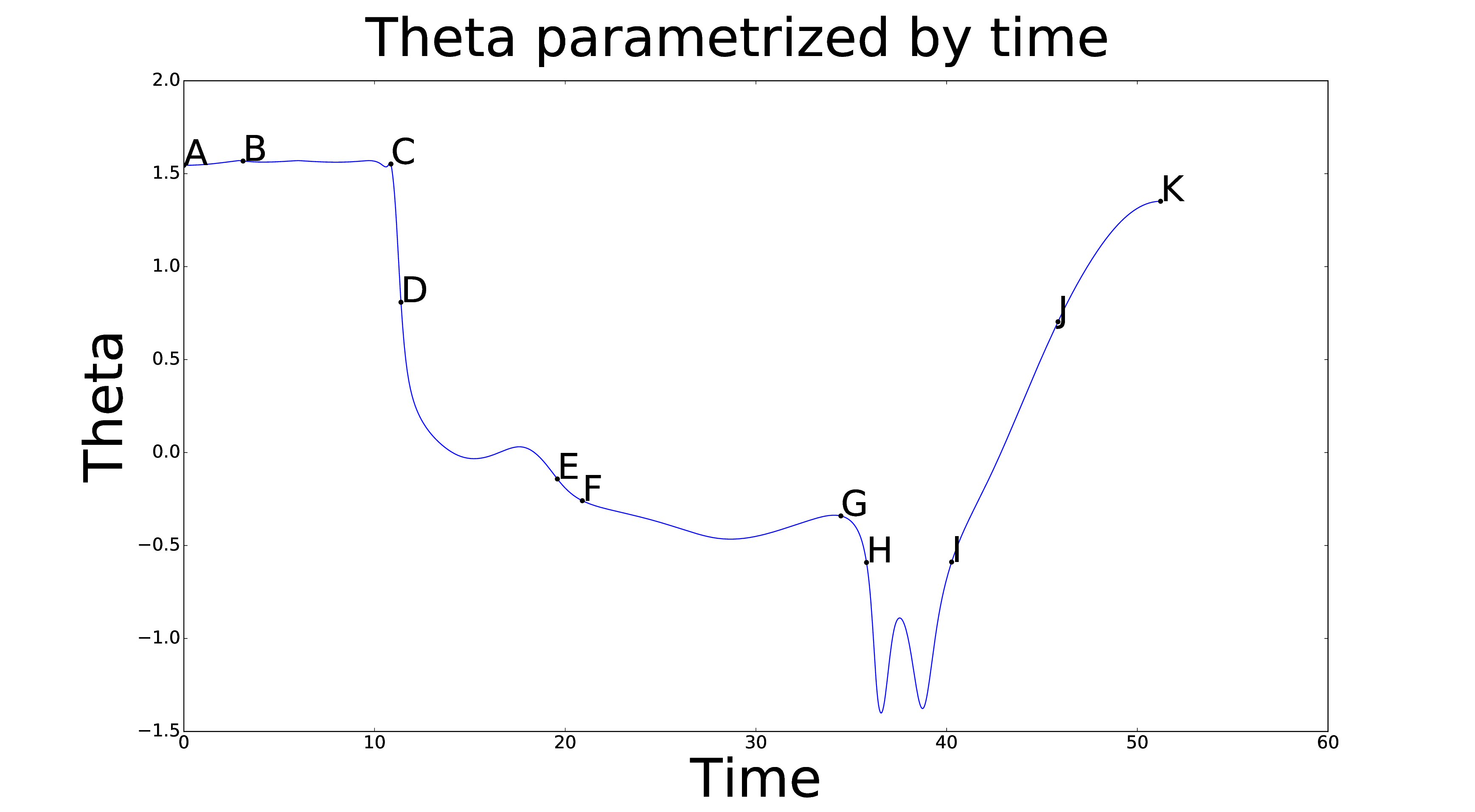}
\includegraphics[scale=0.15]{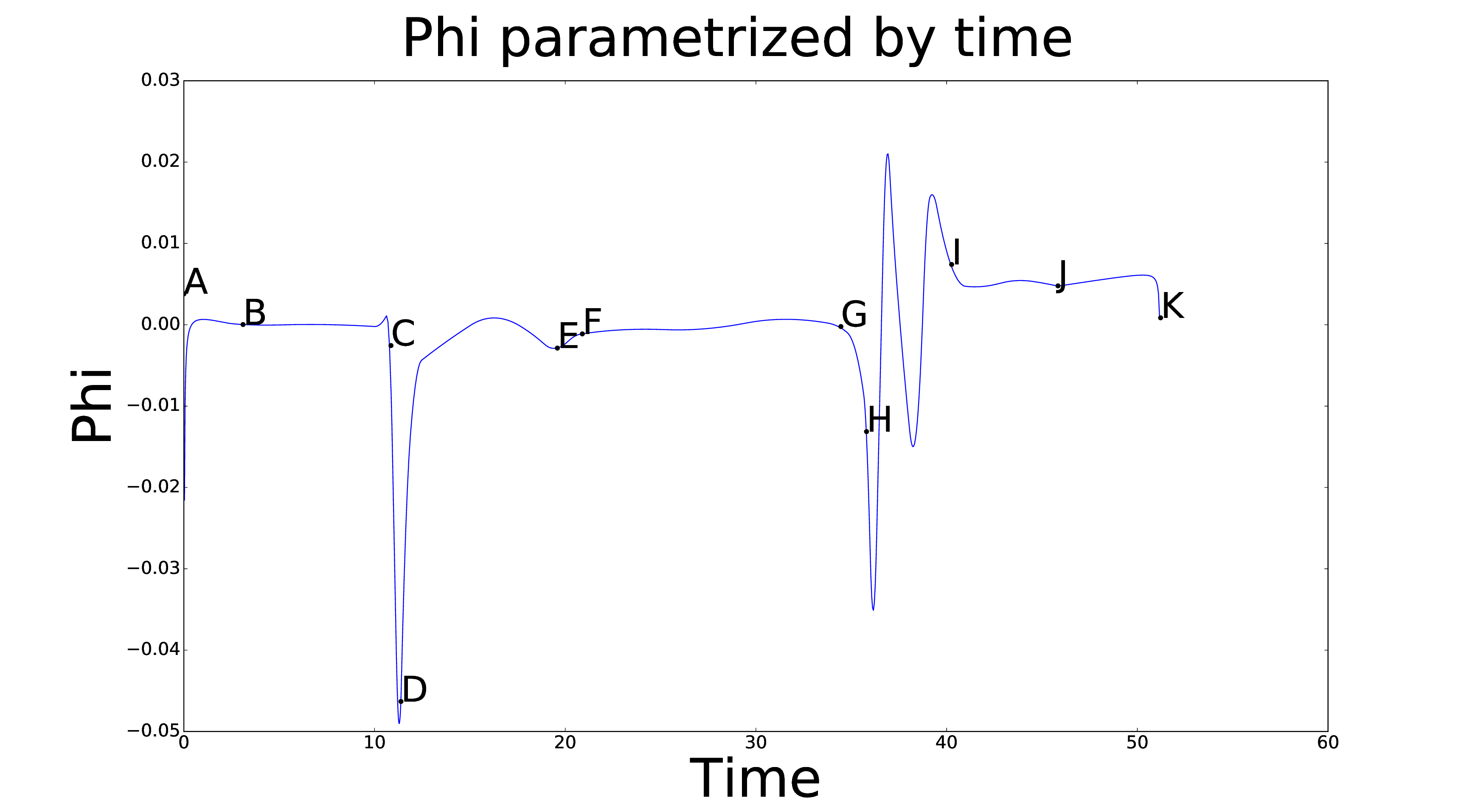}
\caption{The angles theta and phi parametrized by time  
corresponding to the route depicted in figure~\ref{fig::car_route}. For the computation of $\phi$, the car length has been chosen equal to $l=2$m.} \label{fig::theta_phi}
\end{center}
\end{figure}

This route has been defined in several steps. First, the way points $A$, $C$ and following, up to $K$, were chosen in the $(x,y)$-plane to start from the equilibrium point $A$ (intrinsic singularity) along the $y$-axis, which is an apparent singularity for $y_1$ (see section~\ref{atlas-car:subsec}). The car accelerates up to $B$ and then travels at constant speed up to $C$ where it starts making a right turn up to $D$. The route between $C$ and $D$ has been designed by a univariate spline fitting in order to join the previous vertical line to the horizontal segment $DE$, an apparent singularity for $y_2$. The next segment $FG$, after the arc $EF$, again designed by spline fitting, corresponds to a constant heading angle $\theta$, an apparent singularity for $y_3$.
Finally, on the arc $HJ$, the car speed remains constant and then linearly decreases from $J$ to the end point $K$ which is an equilibrium point, thus an intrinsic singularity.

The whole route has been parametrized, in a first step, by its arc length variable on the interval $[0,L]$, with unit speed, in order to allow the design of an arbitrary speed profile over time. 

The trajectory design is done according to the flatness-based method described in \cite{rouchon-et-al-ecc93,rouchon-et-al-cdc93} on each route section. The flat output used is $y_2$ on $AC$, $y_1$ on $CE$, indifferently $y_1$ or $y_2$ on $EG$, and $y_1$ on $GK$ since the component $y$ attains its minimum on this arc, thus with $\dot{y}=0$.

The obtained speed profile of the car is shown in figure~\ref{fig::commands}.    

For the computation of $\phi$, we exclude the end points where the speed vanishes and thus where $\phi$ is only asymptotically defined. See figure~\ref{fig::theta_phi}. Those points, which are indeed intrinsic singularities,  can be approached as close as we want but exactly stopping on them with a prescribed orientation and bounded controls is impossible.

%%%%%%%%%%%%%%%%%%%%%%%%%%%%%%%
%%%%%%%%%%%%%%%%%%%%%%%%%%%%%%%
\section{Concluding Remarks}\label{concl:sec}

In this paper, the concepts of intrinsic and apparent flatness singularities have been defined. These notions are of paramount importance for global trajectory planning, namely planning through apparent singularities, avoiding intrinsic singularities, with the possibility of approaching them as close as possible.

We have also shown that intrinsic singularities include a remarkable set, namely the points where the matrix $P(F)$ of the variational system, which plays a major role in the process of flat output computation, is hyper-singular. 

This analysis is illustrated by the global motion planning of a non holonomic car. In this context, we have exhibited an atlas of flat outputs and a complex trajectory safely passing through all possible charts of this atlas.
  
 Note that this approach may be applied in the same way to other flat systems which do not belong to the class of nonholonomic systems. Moreover, it might be possible to extend it to the computation of the largest reachable set of a system.

\section*{References}

%\bibliography{mybibfile}
%\bibliographystyle{amsplain}

\end{document}